\documentclass[a4paper]{article}

\usepackage[english]{babel}
\usepackage[utf8x]{inputenc}
\usepackage[T1]{fontenc}
\usepackage{physics}

\usepackage[a4paper,top=3cm,bottom=2cm,left=3cm,right=3cm,marginparwidth=1.75cm]{geometry}

\usepackage{tikz}
\usepackage{amsmath}
\usepackage{amssymb}
\usepackage{amsthm}
\usepackage{graphicx}
\usepackage{subcaption}
\usepackage{comment}
\usepackage[numbers]{natbib}
\usepackage[colorinlistoftodos]{todonotes}
\usepackage[colorlinks=true, allcolors=blue]{hyperref}
\newtheorem{theorem}{Theorem}[section]
\newtheorem{corollary}[theorem]{Corollary}

\newtheorem{proposition}[theorem]{Proposition}

\newtheorem{remark}[theorem]{Remark}
\newtheorem*{theorem*}{Theorem}
\newtheorem{definition}[theorem]{Definition}

\usepackage[affil-it,noblocks]{authblk}

\title{$C^*$-extreme maps and $*$-homomorphisms from $C(X)$ to finite von Neumann algebras}
\author[1]{Dimitrios Giannakis}
\author[2,$\ast$]{Michael Montgomery}
\author[3]{Travis Russell}

\affil[1]{Dartmouth College}
\affil[2]{Cascade Quantum}
\affil[3]{Texas Christian University}

\affil[$\ast$]{Corresponding author: mikmntgmry@gmail.com}
\date{}

\begin{document}
\maketitle

\begin{abstract}
    Given a unital inclusion of $C^*$-algebras $\mathcal C \subset \mathcal A$ and a unital inclusion $\mathcal C \subset \mathcal B$ into a von Neumann algebra $\mathcal B$, we investigate the extreme points of unital completely positive maps from $\mathcal A$ to $\mathcal B$ that fix $\mathcal C$ denoted by $UCP_\mathcal C(\mathcal A, \mathcal B)$. This space is obviously convex and $C^*$-convex with respect to the $C^*$-algebra $\mathcal C' \cap \mathcal B$. In this article we show that the $\mathcal C' \cap \mathcal B$-extreme points are exactly the $*$-homomorphisms that fix $\mathcal C$ when $\mathcal A$ is commutative and $\mathcal B$ has a normal faithful center valued trace. This generalizes a result due to Farenick and Morenz where $\mathcal C = \mathbb C 1$ and $\mathcal B = M_n(\mathbb C)$.
\end{abstract}

\section{Introduction}
The study of $C^*$-convexity began with Loebl and Paulsen in \cite{LOEBL198163} and Hopenwasser, Moore, and Paulsen in \cite{35443b83-20cf-3a55-b7e3-e96071c3959f}. Given a bimodule $\mathcal X$ over a $C^*$-algebra $\mathcal B$, we may consider the $C^*$-convex combinations $\sum_{i=1}^n b_i^* x_i b_i$ for elements $x_i \in \mathcal X$ and $b_i \in \mathcal B$ such that $\sum_{i=1}^n b_i^*b_i = 1_\mathcal B$. Further, a subset $K \subset \mathcal X$ is $\mathcal B$-convex if it is closed under $\mathcal B$-convex combinations. A combination is called proper if each coefficient $b_i$ is an invertible element of $\mathcal B$. An element $k \in K$ of an $\mathcal B$-convex set is called $\mathcal B$-extreme if the only proper $\mathcal B$-convex combinations yielding $k$ are of the form $k=u^*k'u$ for a unitary $u \in \mathcal B$.

The study of $C^*$-extreme points was advanced by Farenick and Morenz in \cite{1975eb9c-c8a0-3b9f-a03a-e542cfd075c6} and \cite{FM97} where they classify the $M_n(\mathbb C)$-extreme points of certain subsets of $n\times n$ matrices and in particular show that the $M_n(\mathbb C)$-extreme points of $UCP(C(X),M_n(\mathbb C))$ are exactly the $*$-homomorphisms. Gregg later generalized this characterization to the case $UCP(C(X),\mathcal K^+)$ where $\mathcal K^+$ is the algebra generated by the compact operators and scalar operators on a Hilbert space (see \cite{Gre09}).

The connection between $C^*$-extreme points and $*$-homomorphisms breaks down once we allow the target space to be all of $B(H)$ as shown by Farenick and Morenz in \cite{FM97}. In particular, they show that $\varphi(f) = p_H M_f|_H$ is $C^*$-extreme where $H=H^2(\mathbb T,m) \subset L^2(\mathbb T,m)$ is the Hardy space, $p_H$ the orthogonal projection to $H$ and $f \in C(\mathbb T)$. Clearly $\varphi$ is not a $*$-homomorphism.

More recently, Hotwani and Rao classified the von Neumann algebras $\mathcal M$ where the $C^*$-extreme points of the unit ball $\mathcal M_1$ consist of isometries and co-isometries (see \cite{HR26}). Such von Neumann algebras must be finite, properly infinite factors, or sums of the two.

The success of these generalizations, where the target algebra is $M_n(\mathbb C)$ or $\mathcal K^+$, suggests a new generalization to $*$-algebras with a trace. We require the target algebra, $\mathcal B$, be a von Neumann algebra to use von Neumann's bicommutant theorem. For traciality we focus on \textit{finite} von Neumann algebras (i.e. there exists a normal faithful center valued trace $E \colon \mathcal B \to Z(\mathcal B)$). Let $\mathcal A$ be a $C^*$-algebra which will be the domain of the UCP maps we consider. We will generalize several statements to spaces of UCP maps from $\mathcal A$ to $\mathcal B$ that fix $\mathcal C$ pointwise where $\mathcal C$ is a shared $C^*$-subalgebra $\mathcal C \subset \mathcal A$, $\mathcal C \subset \mathcal B$ with unital inclusions. Such UCP maps arise in many natural ways. For example, if $\mathcal A$ and $\mathcal B$ are represented on a shared Hilbert space $H$, the conditional expectations $\mathcal E \colon C^*(\mathcal A, \mathcal B) \to \mathcal B$ automatically fix $\mathcal C = \mathcal A \cap \mathcal B$.

 Consider the case where $\mathcal A$ is commutative. Then there are compact Hausdorff spaces $X$ and $Y$ with $\mathcal C =C(Y) \subset \mathcal A =C(X)$, $C(Y) \subset \mathcal B$. In this article we analyze the $C(Y)'\cap \mathcal B$-extreme points of
 $$UCP_{C(Y)}(C(X),\mathcal B) = \{ \varphi \in UCP(C(X),\mathcal B) : \varphi |_{C(Y)} = id_{C(Y)}\}.$$
 Our results are summarized in the following theorem.

\begin{theorem}\label{main theorem}
Let $X$ and $Y$ be compact Hausdorff spaces and $\mathcal B$ a finite von Neumann algebra. Then the $C(Y)' \cap \mathcal B$-extreme points of $UCP_{C(Y)}(C(X),\mathcal B)$ are exactly the $*$-homomorphisms from $C(X)$ to $\mathcal B$ that fix $C(Y)$.
\end{theorem}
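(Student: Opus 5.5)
We prove the two inclusions separately, writing $\mathcal D = C(Y)'\cap\mathcal B$ for the coefficient algebra.

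\textbf{Homomorphisms are $\mathcal D$-extreme.} Let $\pi$ be a $*$-homomorphism fixing $C(Y)$, and suppose
$$\pi=\sum_{i=1}^n t_i^*\varphi_i t_i$$
is a proper $\mathcal D$-convex combination, with $t_i\in\mathcal D$ invertible, $\sum_i t_i^*t_i=1$ and $\varphi_i\in UCP_{C(Y)}(C(X),\mathcal B)$. The plan has four steps.

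\begin{enumerate}
\item For each $i$, consider the UCP map $\Phi(f)=\mathrm{diag}(\varphi_i(f))$ and the isometric column $V=(t_1,\dots,t_n)^T$. Then $\pi=V^*\Phi V$, and $\pi$ is multiplicative.
\item Applying the Schwarz/multiplicative-domain argument to $V^*\Phi V$, we get that $VV^*$ commutes with $\Phi(C(X))$. Concretely, this yields $t_i\pi(f)=\varphi_i(f)t_i$ for all $f$ and $i$.
\item Since $t_i$ is invertible, $\varphi_i(f)=t_i\pi(f)t_i^{-1}$, so each $\varphi_i$ is a $*$-homomorphism similar to $\pi$.
\item Take the polar decomposition $t_i=u_i|t_i|$. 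Because $\varphi_i$ is $*$-preserving, $|t_i|^2$ commutes with $\pi(C(X))$, hence so does $|t_i|$, and therefore $\varphi_i=u_i\pi u_i^*$. Here $u_i$ is unitary because $t_i$ is invertible, and $u_i\in\mathcal D$ because $\mathcal D$ is a von Neumann algebra containing $t_i$.
\end{enumerate}

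This direction does not use finiteness. The key point in step 2 is that the operator $\sum_i t_i^*\varphi_i(f^*f)t_i-\sum_i t_i^*\varphi_i(f)^*t_i t_j^*\varphi_j(f)t_j$ vanishes; I would verify this via a Stinespring dilation of $\Phi$.

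\textbf{$\mathcal D$-extreme points are homomorphisms.} Let $\varphi$ be $\mathcal D$-extreme.

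First, take the Stinespring/Radon–Nikodym picture. Since $\mathcal B$ is a von Neumann algebra, $\varphi$ extends to a UCP map on bounded Borel functions (or we work with its $\mathcal B$-valued positive operator valued measure $\mu$ on $X$). Because $\varphi$ fixes $C(Y)$, $C(Y)$ lies in its multiplicative domain. Hence $\varphi(gf)=g\varphi(f)$ for $g\in C(Y)$, which shows that the range of $\mu$ behaves well with respect to $C(Y)$; the commutant $\mathcal D$ is the natural coefficient algebra.

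Next, to show $\varphi$ is multiplicative it suffices to show that $\mu(E)$ is a projection for every Borel set $E$.

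\textbf{Main step.} Suppose some $\mu(E)=a$ is not a projection. We would decompose $\varphi$ in a Farenick–Morenz style: write $\varphi = a^{1/2}\psi_1 a^{1/2}+(1-a)^{1/2}\psi_2(1-a)^{1/2}$, where $\psi_1,\psi_2$ are obtained by restricting $\mu$ to $E$ and to $E^c$ and normalizing. The catch is that $a$ must lie in $\mathcal D$, and in general $\mu(E)$ need not commute with $C(Y)$. To fix this, we choose $E=p^{-1}(F)\cap G$ adapted to the fibres, or better, use Borel subsets $E$ with $\mu(E)$ in $\mathcal D$. Such subsets exist because, after passing to the multiplicative domain, $\mu(E)$ commutes with $C(Y)$ whenever $E$ is saturated. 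For non-saturated $E$ we argue through the Radon–Nikodym derivatives relative to $\mathcal D$. To make the combination proper, we perturb with $\epsilon$: use coefficients $(\epsilon+(1-2\epsilon) a)^{1/2}$, which are invertible.

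Finally, extremality gives $\psi_1=u^*\varphi u$ with $u\in\mathcal D$ unitary. We then take the center-valued trace $E_{\mathcal B}$ of both sides, evaluated at $1_E$: this gives $E_{\mathcal B}(\psi_1(1_E))=E_{\mathcal B}(\varphi(1_E))$. Unitary invariance of the trace then forces $E_{\mathcal B}(a)$ to equal a normalized quantity that can hold only if $a$ is a projection. This yields $E_{\mathcal B}(a-a^2)=0$, and faithfulness gives $a=a^2$.

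This trace comparison is where finiteness is essential; the Hardy space example shows that it fails in $B(H)$. I expect the hardest part to be justifying that the coefficients can be taken in $C(Y)'\cap\mathcal B$ and invertible while still keeping the $\psi_i$ in $UCP_{C(Y)}$.
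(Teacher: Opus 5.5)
\textbf{Direction 1 (homomorphisms are extreme).} Your argument is correct. It is more direct than the paper's, which reads this direction off its Stinespring characterization: the minimal dilation of a homomorphism is $(\varphi,\mathrm{id},H)$, and one takes $U=1$, $Z=D^{1/2}$. Your step 2 does not even need a dilation. Since $\Phi$ is UCP and $1-VV^*\ge 0$,
\[
\pi(f^*f)=V^*\Phi(f^*f)V\ \ge\ V^*\Phi(f)^*\Phi(f)V\ \ge\ V^*\Phi(f)^*VV^*\Phi(f)V=\pi(f)^*\pi(f).
\]
Equality forces $(1-VV^*)\Phi(f)V=0$, i.e.\ $\Phi(f)V=V\pi(f)$, which is exactly $\varphi_i(f)t_i=t_i\pi(f)$. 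Steps 3 and 4 are fine; take $U_i=u_i^*$ to match the convention $\varphi_i=U_i^*\pi U_i$. Finiteness is indeed not used here.

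\textbf{Direction 2: the gap.} The step you flag as the hardest rests on a false premise. Your sketched workaround would not repair it. For a saturated set $E=p^{-1}(F)$, $\mu(E)$ is a spectral projection of the inclusion $C(Y)\subset\mathcal B$. So it is a projection for free, and those sets carry no information. ``Arguing through Radon--Nikodym derivatives relative to $\mathcal D$'' is not an argument.

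The missing point is one line. Since $C(Y)$ lies in the multiplicative domain and $C(X)$ is commutative,
\[
c\varphi(f)=\varphi(cf)=\varphi(fc)=\varphi(f)c ,
\]
so $\varphi(C(X))\subseteq\mathcal D$. This passes to the Borel extension $\tilde\varphi$ by uniqueness of the regular measures representing $f\mapsto\langle\varphi(f)\xi,\eta\rangle$. Consequently $\mu(E)\in\mathcal D$ for every Borel $E$, and $\tilde\varphi(kc)=\tilde\varphi(k)c$ for bounded Borel $k$ and $c\in C(Y)$.

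With this, fix $0<\epsilon<1/2$ and set
\[
h=\epsilon+(1-2\epsilon)1_E,\qquad a=\mu(E),\qquad b=\tilde\varphi(h)=\epsilon+(1-2\epsilon)a .
\]
Then
\[
\psi_1=b^{-1/2}\tilde\varphi(h\,\cdot)\,b^{-1/2},\qquad \psi_2=(1-b)^{-1/2}\tilde\varphi((1-h)\,\cdot)\,(1-b)^{-1/2}
\]
lie in $UCP_{C(Y)}(C(X),\mathcal B)$, and $\varphi=b^{1/2}\psi_1b^{1/2}+(1-b)^{1/2}\psi_2(1-b)^{1/2}$ is a proper $\mathcal D$-convex combination.

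Your trace step does then close, but write it out. Extremality gives $\psi_1=w^*\varphi w$ with $w\in\mathcal D$ unitary. Evaluate the Borel extensions at $1_E$, using $h1_E=(1-\epsilon)1_E$ and the fact that $a,b$ commute. This yields $(1-\epsilon)ab^{-1}=w^*aw$, hence
\[
E_{\mathcal B}\bigl((1-\epsilon)ab^{-1}-a\bigr)=(1-2\epsilon)\,E_{\mathcal B}\bigl(a(1-a)b^{-1}\bigr)=0 .
\]
Since $a(1-a)b^{-1}\ge 0$, faithfulness gives $a=a^2$. Combined with the standard fact that a projection-valued POVM yields a $*$-homomorphism, this proves the converse.

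\textbf{Comparison with the paper.} The paper stays inside $C(X)$. It takes continuous $g$ with $\varepsilon\le g\le 1-\varepsilon$ and gets $\varphi(g\,\cdot)=Z^*\varphi(\cdot)Z$ from its invertible-operator criterion. It then iterates to $g^2$, applies Kadison--Schwarz, and uses polar decomposition plus the trace to show $Z$ is normal, which puts $g$ in the multiplicative domain. Your route produces the same intertwining relation, with $Z=wb^{1/2}$, but for a step function. Evaluating at the idempotent $1_E$ then makes a single trace comparison sufficient, with no normality argument. The price is the Borel extension and the routine measure-theoretic checks above: it stays in $\mathcal D$, it remains a $C(Y)$-module map, and projection-valued implies multiplicative.
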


In particular, the $\mathcal B$-extreme points of $UCP(C(X),\mathcal B)$ are exactly the $*$-homomorphisms from $C(X)$ to $\mathcal B$ when $\mathcal B$ is a finite von Neumann algebra and $X$ is a compact Hausdorff space.

\section{Bimodule Radon Nikodym type theorems}
For the remainder of this article $\mathcal C \subset \mathcal A$ is a unital inclusion of $C^*$-algebras, $\mathcal B$ is a von Neumann algebra faithfully and normally represented on $H$, and $\mathcal C \subset \mathcal B$ is a unital inclusion. Statements that require $\mathcal B$ to be a finite von Neumann algebra will be clearly indicated. Generic elements from $\mathcal A$, $\mathcal B$, $\mathcal C$, and the Hilbert space $H$ will be labeled $a_i$, $b_i$, $c_i$, and $h_i$ respectively.

For references on $C^*$-algebras and von Neumann algebras see \cite{Tak79}. Let $\lambda \colon \mathcal B \to B(H)$ be a faithful normal representation. Then $H$ can be equipped with a normal right action by $(\mathcal B')^{op}$, $\rho \colon \mathcal B' \to B(H)$. Then von Neumann's bicommutant theorem
$$\lambda(\mathcal B) = \{ T \in B(H) : [T,\rho(b')] = 0 \text{, for } b' \in \mathcal B'\} = \rho(\mathcal B')'$$
provides the main method to show that UCP maps send $\varphi \colon \mathcal A \to \lambda(\mathcal B)$.
Vector spaces with commuting left and right actions like $_\mathcal BH_{\mathcal B'^{op}}$ are called \textit{bimodules}. For the remainder of the article we will make use of bimodules and the relation between commutants above.

For standard background on complete positivity see Paulsen's book \cite{paulsen2002completely}. We briefly review some basic facts on complete positivity. A map $\varphi \colon \mathcal A \to \mathcal B$ is positive if $\varphi(\mathcal A_+) \subset \mathcal B_+$ where $\mathcal A_+$ and $\mathcal B_+$ denote the positive operators. A map $\varphi \colon \mathcal A \to \mathcal B$ is \textit{completely positive} (i.e. $\varphi \in CP(\mathcal A,\mathcal B)$) if $\varphi \otimes 1_{M_n(\mathbb C)} \colon \mathcal A \otimes M_{n}(\mathbb C) \to \mathcal B \otimes M_n(\mathbb C)$ is positive for all $n \in \mathbb N$. A map $\varphi$ is called a \textit{unital completely positive} map (UCP) if $\varphi(1_\mathcal A) = 1_\mathcal B$ along with being completely positive. We also make use of the Kadison-Schwarz inequality; if $\varphi$ is a UCP map then $\varphi(a^*a) \geq \varphi(a)^* \varphi(a)$ for all $a \in \mathcal A$. Finally, in \cite{Cho74}, Man-Duen Choi developed the multiplicative domain of $\varphi$ that is characterized by $\varphi(a^*a) = \varphi(a)^*\varphi(a)$ for $a \in \mathcal A$. If $a \in \mathcal A$ belongs to the multiplicative domain of $\varphi$ then $\varphi(xa) = \varphi(x)\varphi(a)$ for all $x \in \mathcal A$.

\begin{definition}
For $C^*$-algebras $\mathcal A$, $\mathcal B$, and $\mathcal C$ as above, $UCP_\mathcal C(\mathcal A,\mathcal B)$ denotes the set of $UCP$ maps $\varphi \colon \mathcal A \to \mathcal B$ that fix $\mathcal C$ pointwise.
\end{definition}

The space of operator norm bounded maps from $\mathcal A$ to $\mathcal B$, $\mathcal X = B(\mathcal A, \mathcal B)$, is a $\mathcal B -\mathcal B$ bimodule with left and right actions
$$(b_1 \cdot \phi \cdot b_2)(a)=b_1\phi(a)b_2$$
that contains $UCP_\mathcal C(\mathcal A, \mathcal B)$.

\begin{definition}
Let $_\mathcal B \mathcal X _\mathcal B$ be a topological vector space with a left and right action by a $C^*$-algebra $\mathcal B$. Fix a subalgebra $\mathcal C \subset \mathcal B$. A subset $\mathcal K \subset \mathcal X$ is called $\mathcal C' \cap \mathcal B$-convex if it is closed under the $\mathcal C' \cap \mathcal B$-convex combinations, i.e. $$\sum_{i=1}^n b_i^*k_ib_i \in \mathcal K$$ for all $k_i \in \mathcal K$, $b_i \in \mathcal C' \cap \mathcal B$, $1 \leq i \leq n$ with $\sum_{i=1}^n b_i^* b_i = 1$. When the $b_i$'s are invertible, the sum is called a proper $\mathcal C' \cap \mathcal B$-convex combination.
\end{definition}

The main example of $\mathcal C' \cap \mathcal B$-convex sets in this article is the collection of $UCP_\mathcal C(\mathcal A,\mathcal B)$ maps.

\begin{definition}
A point $k \in \mathcal K$ is called $\mathcal C' \cap \mathcal B$-extreme in $\mathcal K$ if, whenever $\sum_i b_i^* k_i b_i = k$ is a proper $\mathcal C' \cap \mathcal B$-convex combination, then each $k_i$ is unitarily equivalent to $k$ i.e. there are unitary operators $u_i \in \mathcal C' \cap \mathcal B$ such that $k_i = u_i^* k u_i$ for $1 \leq i \leq n$.
\end{definition}

 For the remainder of the paper $H$ will be a fixed normal faithful representation of $\mathcal B$ and $\mathcal B'^{op}$ is its commutant on $H$. We now show that the minimal Stinespring dilation of maps $\varphi \in UCP_\mathcal C(\mathcal A,\mathcal B)$ (i.e. $\varphi(\mathcal A) \subset \lambda(\mathcal B)$) inherits several bimodule properties.

\begin{proposition}
Let $\varphi \colon \mathcal A \to B(H)$ be a unital completely positive map and $\mathcal C \subset \mathcal A,\mathcal B$. Then $\varphi \in UCP_\mathcal C(\mathcal A,\mathcal B)$ (i.e. $\varphi(\mathcal A) \subset \lambda(\mathcal B)$ and $\varphi(c)= \lambda(c)$ for $c \in \mathcal C$) iff its minimal Stinespring dilation is given by a triple
$$\left(\pi \colon \mathcal A \to End(K_{\mathcal B'^{op}}), V \in Hom(_\mathcal C H_{\mathcal B'^{op}}, {_\mathcal C K_{\mathcal B'^{op}}}), {_\mathcal A K_{\mathcal B'^{op}}} \right), \quad \varphi(a) = V^* \pi(a) V, \quad a \in \mathcal A,$$
for a Hilbert space $K$ where the left action of $\mathcal A$ and $\mathcal C$ on $K$ are given by $\pi$ and $\mathcal B'^{op}$ has a normal action on the right.
\end{proposition}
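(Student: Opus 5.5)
We prove the two implications separately, using the explicit minimal Stinespring construction. Write $K$ for the separation–completion of $\mathcal A \odot H$ under $\langle a_1 \otimes h_1, a_2 \otimes h_2\rangle = \langle \varphi(a_2^*a_1) h_1, h_2\rangle$, with $\pi(a)(a_1\otimes h) = aa_1 \otimes h$ and $Vh = 1 \otimes h$. Minimal dilations are unique up to unitary equivalence, so it suffices to work with this model.

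\emph{Forward direction.} Assume $\varphi(\mathcal A)\subset\lambda(\mathcal B)$ and $\varphi|_{\mathcal C} = \lambda|_{\mathcal C}$.

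First we build the right action of $\mathcal B'$ on $K$. Define it on elementary tensors by $(a\otimes h)\cdot b' = a \otimes \rho(b')h$. To see this is well defined and bounded, take $\xi = \sum_i a_i \otimes h_i$ and compute
$$\|\xi\cdot b'\|^2 = \sum_{i,j}\langle \varphi(a_j^*a_i)\rho(b')h_i,\rho(b')h_j\rangle .$$
Let $\Phi = [\varphi(a_j^*a_i)]_{i,j}$. By complete positivity $\Phi \ge 0$ in $M_n(\lambda(\mathcal B))$, and $\Phi$ commutes with the diagonal operator $\rho(b')\otimes 1_n$. Hence $(\rho(b')\otimes 1)^*\Phi(\rho(b')\otimes 1) = \Phi^{1/2}(\rho(b')^*\rho(b')\otimes 1)\Phi^{1/2} \le \|b'\|^2\Phi$, which gives $\|\xi\cdot b'\|\le\|b'\|\|\xi\|$. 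The action therefore passes to the quotient and the completion.

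It is a representation of $\mathcal B'^{op}$, and it is a $*$-action because
$$\langle (a_1\otimes h_1)b',a_2\otimes h_2\rangle = \langle \varphi(a_2^*a_1)\rho(b')h_1,h_2\rangle = \langle a_1\otimes h_1,(a_2\otimes h_2)b'^*\rangle .$$

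Normality is the step I expect to need the most care. For a bounded increasing net $b'_\alpha \nearrow b'$, the vector functionals $\langle (\xi\cdot b'_\alpha),\xi\rangle = \sum_{i,j}\langle \Phi_{ji}\rho(b'_\alpha)h_i,h_j\rangle$ are finite sums of normal functionals of $\rho$, because $\Phi_{ji}\in\rho(\mathcal B')'$. So they converge on the dense subspace. Uniform boundedness of the net then upgrades this to WOT convergence on all of $K$.

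Next, $\pi(a)$ commutes with the right action by construction, so $\pi(\mathcal A)\subset End(K_{\mathcal B'^{op}})$. Likewise $V\rho(b')h = 1\otimes \rho(b')h = (Vh)\cdot b'$, so $V$ is a right module map.

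For $\mathcal C$-linearity we use the multiplicative domain. Since $\varphi(c^*c) = c^*c = \varphi(c)^*\varphi(c)$, every $c\in\mathcal C$ lies in the multiplicative domain of $\varphi$. Then
$$\|\pi(c)V h - V\lambda(c)h\|^2 = \langle(\varphi(c^*c) - \varphi(c)^*c - c^*\varphi(c) + c^*c)h,h\rangle = 0,$$
so $\pi(c)V = V\lambda(c)$, i.e. $V\in Hom({}_{\mathcal C}H_{\mathcal B'^{op}},{}_{\mathcal C}K_{\mathcal B'^{op}})$.

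\emph{Converse.} Suppose the dilation has the stated form. Both $V$ and $\pi(a)$ commute with the right $\mathcal B'$-actions. Since the action on $K$ is a $*$-action, the adjoint $V^*$ also intertwines. Hence $V^*\pi(a)V$ commutes with $\rho(\mathcal B')$, and the bicommutant theorem gives $V^*\pi(a)V\in\rho(\mathcal B')'=\lambda(\mathcal B)$. Finally, for $c\in\mathcal C$ we have $\varphi(c) = V^*\pi(c)V = V^*V\lambda(c) = \lambda(c)$, using $V^*V = \varphi(1) = 1$ because $\varphi$ is unital.
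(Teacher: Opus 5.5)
Your proof is correct and follows essentially the same route as the paper. It uses the same separated-completion model of the minimal dilation with right action $[a\otimes h]\mapsto[a\otimes \rho(b')h]$, the same computation (via $\varphi|_{\mathcal C}=\mathrm{id}$ and the multiplicative domain) giving $\pi(c)V=V\lambda(c)$, and the bicommutant theorem for the converse. You also fill in details the paper dismisses as trivial: boundedness of the right action via $\Phi\geq 0$ commuting with $\rho(b')\otimes 1_n$, the $*$-property needed for $V^*$ to intertwine, and an equally valid normality check via increasing nets instead of bounded SOT-convergent nets.
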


\begin{proof}
Suppose the minimal Stinespring dilation of $\varphi$ is given by such a triple. Then $\varphi(a) = V^*\pi(a)V \in \rho(\mathcal B')' \cap B(H) = \lambda(\mathcal B)$. Since $V$ intertwines the actions of $\mathcal C$, then $\varphi(c) = V^*\pi(c)V = cV^*V = c 1_\mathcal B$ for all $c \in \mathcal C$. Hence, $\varphi \in UCP_\mathcal C(\mathcal A,\mathcal B)$.

Now suppose that $\varphi \in UCP_\mathcal C(\mathcal A,\mathcal B)$. Define $K_0 = \overline{\mathcal A \otimes H/\ker(\braket{\cdot}{\cdot})}$ with the inner product
$$\braket{\sum_i a'_i \otimes h'_i}{\sum_j a_j \otimes h_j} = \sum_{i,j} \braket{\varphi({a_j}^* a'_i) h'_i}{h_j}$$
$$\ker(\braket{\cdot}{\cdot}) = \{ \xi \in \mathcal A \otimes H | \braket{\xi}{\xi} = 0 \} = \{ \xi \in \mathcal A \otimes H | \braket{\xi}{\eta} = 0\text{ for all } \eta \in \mathcal A \otimes H \}.$$
Then we have maps $V_0 \colon H \to K_0$, $V_0(h) = [1 \otimes h]$ and $\pi_0 \colon \mathcal A \to B(K_0)$, $\pi_0(a)([a_1 \otimes h]) = [aa_1 \otimes h]$. The minimal dilation is the restriction to $K = [\pi(\mathcal A)VH] \subset K_0$ along with the restriction $\pi(a) = \pi(a)|_K$ and co-restriction $V = V|^K$. Then the triple $(\pi,V,K)$ is the minimal dilation of $\varphi$.

Since $\varphi$ maps to $\mathcal B \subset B(H)$, we may consider the right action by $\mathcal B'^{op}$ in this construction, $\rho(b')([a \otimes h]) = [a \otimes h b']$. It is trivial to verify that the minimal dilation consists of right $\mathcal B'^{op}$ module maps and the right $\mathcal B'^{op}$ module $K$, so that $V \colon H_{\mathcal B'^{op}} \to K_{\mathcal B'^{op}} $ and $\pi(a) \colon K_{\mathcal B'^{op}}  \to K_{\mathcal B'^{op}} $ for $a\in \mathcal A$. For normality, we need only check for norm bounded $SOT$ convergent nets $b_i' \to b'$ that $[a \otimes hb_i'] \to [a \otimes h b']$ which extends to $SOT$ convergence of $\rho(b_i') \to \rho(b')$. Furthermore, the fixed subalgebra $\mathcal C$ is given a left representation on $K$ by $\pi(c) \colon K_{\mathcal B'^{op}}  \to K_{\mathcal B'^{op}} $. Since $\mathcal C$ belongs to the multiplicative domain of $\varphi$, we also have $ac \otimes h- a \otimes ch \in \ker(\braket{\cdot}{\cdot})$ and so $V$ intertwines the left representation of $\mathcal C$, $\lambda|_\mathcal C \colon \mathcal C \to B(H)$ with $\pi|_\mathcal C$, $V \colon _\mathcal C H_{\mathcal B'^{op}}  \to {_\mathcal C K_{\mathcal B'^{op}} }$.
\end{proof}
We also note that $\mathcal C$ is faithfully represented on the minimal dilation. Since $V^*V=\varphi(1_\mathcal A) =1_\mathcal B$, we have
$$\lambda(c) = \lambda(c) V^*V  = V^* \pi(c) V.$$
If $\pi(c)=0$ for some $c \in \mathcal C$, then $\lambda(c) = V^*\pi(c)V = 0$ and so $c=0$ as $\lambda$ is a faithful representation of $\mathcal B$ on $H$. This makes $\pi \colon \mathcal C \to End(K_{\mathcal B'^{op}} )$ a faithful $*$-homomorphism.

An identical proposition holds for completely positive maps $\psi \colon \mathcal A \to B(H)$. Then $\psi \in CP(\mathcal A,\mathcal B)$ iff its minimal Stinespring dilation is given by a triple
$$\left(\alpha \colon \mathcal A \to End(L_{\mathcal B'^{op}}), W \in Hom(H_{\mathcal B'^{op}}, { L_{\mathcal B'^{op}}}), {_\mathcal A L_{\mathcal B'^{op}}} \right), \quad \psi(a) = W^* \alpha(a) W, \quad a \in \mathcal A.$$

\begin{remark}
Suppose we have two different minimal Stinespring dilations of $\varphi \colon \mathcal A \to B(H)$, $(\pi,V,K)$ and $(\pi',V',K')$. If $\varphi \in UCP_\mathcal C(\mathcal A,\mathcal B)$ then the unique unitary intertwining them is determined by
$$U(\pi(a)Vh) = \pi'(a)V'h$$
which means $U \in Hom({_\mathcal A K_{\mathcal B'^{op}}}, {_\mathcal A K'_{\mathcal B'^{op}}})$. We also have the identity $UV = V'$.
\end{remark}

We may now adapt many standard results about $C^*$-extreme points of $UCP$ maps to the context of $UCP_\mathcal C$. The following theorems and propositions are generalizations of known results from various papers whose proofs can be updated to the bimodule context. These are all within the context of unital $C^*$-algebras $\mathcal A$ and $\mathcal C$ along with a finite von Neumann algebra $\mathcal B$ as above. Their original statements correspond to the specialization $\mathcal B = B(H)$, $\mathcal B'^{op} =\mathbb C 1$, and $\mathcal C = \mathbb C 1$.

\begin{theorem}[Generalizing from \cite{Arveson69} A Radon Nikodym type Theorem]\label{Radon Nikodym type Theorem} Let $\varphi \colon \mathcal A \to B(H)$ with $\varphi \in UCP_{\mathcal C}(\mathcal A, \mathcal B)$ and minimal Stinespring dilation $(\pi,V, {_{\mathcal A}K_{\mathcal B'^{op}}})$. Then for $\psi \colon \mathcal A \to B(H)$, $\psi \in CP(\mathcal A,\mathcal B)$, $\psi \leq \varphi$ (i.e. $\varphi - \psi$ is completely positive) if and only if there exists a positive contraction $D \in \pi(\mathcal A)' \cap \rho(\mathcal B')' \cap B(K)$ such that $\psi(a) = V^*\pi(a)DV = V^*D^{1/2}\pi(a)D^{1/2}V$ for all $a \in \mathcal A$. The operator $D$ is unique in the sense that if $\psi(a) = V^*\pi(a)YV$ for some $Y \in \pi(\mathcal A)' \cap \rho(\mathcal B')'$, then $Y = D$.
\end{theorem}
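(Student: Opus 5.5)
We follow Arveson's classical argument and check at each step that the operators involved commute with the right action $\rho$ of $\mathcal B'^{op}$ on $K$, as in the preceding Proposition.

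\emph{Converse direction.} Suppose $D \in \pi(\mathcal A)' \cap \rho(\mathcal B')'$ is a positive contraction and set $\psi(a) = V^*\pi(a)DV$. Since $D$ commutes with $\pi(\mathcal A)$, so does $D^{1/2}$, and hence $\psi(a) = V^*D^{1/2}\pi(a)D^{1/2}V$. This exhibits $\psi$ as completely positive. Likewise $\varphi - \psi = V^*(1-D)^{1/2}\pi(\cdot)(1-D)^{1/2}V$ is completely positive, because $0\le D\le 1$. For each $b'\in\mathcal B'$, the operators $V$, $V^*$, $\pi(a)$ and $D$ all intertwine $\rho(b')$. Therefore $\psi(a)$ commutes with $\rho(\mathcal B')$, and by the bicommutant theorem $\psi(a) \in \lambda(\mathcal B)$.

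\emph{Forward direction.} Let $\psi\le\varphi$ be given. Work on the dense subspace of $K$ spanned by the vectors $\pi(a)Vh$, and define a sesquilinear form
\[
\Big\langle \textstyle\sum_i \pi(a_i)Vh_i, \sum_j \pi(a'_j)Vh'_j\Big\rangle_\psi = \sum_{i,j}\langle \psi(a_j'^*a_i)h_i,h'_j\rangle .
\]
Complete positivity of $\psi$ and of $\varphi-\psi$ gives $0 \le \langle\xi,\xi\rangle_\psi \le \langle\xi,\xi\rangle$ on this span. This bound is the point that needs care: it shows the form is well defined, since vectors of norm zero in $K$ have $\psi$-norm zero, and that it is bounded. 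The form therefore extends to $K$, and it yields a unique positive contraction $D\in B(K)$ with $\langle D\xi,\eta\rangle = \langle \xi,\eta\rangle_\psi$.

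The remaining commutation properties follow by computing on generators. From $\langle D\pi(a)\xi,\eta\rangle_{} = \langle \xi,\pi(a^*)\eta\rangle_\psi$ we get $D\in\pi(\mathcal A)'$. For the right action, use $\rho(b')\pi(a)Vh=\pi(a)V(hb')$ together with the fact that $\psi(a)\in\lambda(\mathcal B)$ commutes with $\rho(b')$. These give $\langle \rho(b')\xi,\eta\rangle_\psi=\langle \xi,\rho(b'^*)\eta\rangle_\psi$. Hence $D\rho(b')=\rho(b')D$, that is, $D\in\rho(\mathcal B')'$. Finally, taking $\xi = Vh$ and $\eta=\pi(a^*)Vh'$ gives $\langle V^*\pi(a)DVh,h'\rangle = \langle \psi(a)h,h'\rangle$, so $\psi(a)=V^*\pi(a)DV$.

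\emph{Uniqueness.} Suppose $Y\in\pi(\mathcal A)'\cap\rho(\mathcal B')'$ satisfies $V^*\pi(a)YV = V^*\pi(a)DV$ for all $a\in\mathcal A$. Then for all $a_1,a_2\in\mathcal A$ and $h_1,h_2\in H$,
\[
\langle (Y-D)\pi(a_1)Vh_1,\pi(a_2)Vh_2\rangle = \langle V^*\pi(a_2^*a_1)(Y-D)Vh_1,h_2\rangle = 0 .
\]
By minimality the vectors $\pi(a)Vh$ span a dense subspace of $K$, so $Y=D$. The only step requiring real care is the well-definedness and boundedness of the form in the forward direction; everything else is a routine check that the bimodule structure is preserved.
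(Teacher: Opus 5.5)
Your proof is correct, but it builds $D$ by a different route from the paper.

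\emph{The paper's route.} The paper takes the minimal Stinespring dilation $(\alpha,W,L)$ of $\psi$ and defines the contraction $T\colon K\to L$ by $T(\pi(a)Vh)=\alpha(a)Wh$. It then sets $D=T^*T$. The relations $D\in\pi(\mathcal A)'\cap\rho(\mathcal B')'$ come for free, because $T$ intertwines the left $\mathcal A$-actions and the right $\mathcal B'^{op}$-actions.

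\emph{Your route.} You represent the bounded form $\langle\cdot,\cdot\rangle_\psi$ directly on $K$. This gives the same operator, since $\langle T\xi,T\eta\rangle=\langle\xi,\eta\rangle_\psi$. You then check the commutation relations by hand on generators.

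\emph{What your route buys.} You never need a dilation of $\psi$. In particular, you do not rely on the CP analogue of the Proposition, which the paper only asserts; that analogue would give $L$ a normal right $\mathcal B'^{op}$-action with $W$ a module map. In your argument the hypothesis $\psi(\mathcal A)\subset\lambda(\mathcal B)$ enters exactly once, in the identity $\langle\rho(b')\xi,\eta\rangle_\psi=\langle\xi,\rho(b'^*)\eta\rangle_\psi$. This makes clear where membership of $\psi$ in $CP(\mathcal A,\mathcal B)$, rather than merely in $CP(\mathcal A,B(H))$, is used.

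\emph{Well-definedness.} Your step here tacitly uses Cauchy--Schwarz for the semidefinite form on $\mathcal A\odot H$. That is what passes from $\|x\|_\psi=0$ to $\langle x,y\rangle_\psi=0$ for all $y$. This is the same norm estimate that makes the paper's $T$ well defined, so it is no extra cost, but it is worth stating.

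\emph{Uniqueness.} Your argument uses density of $\pi(\mathcal A)VH$ in $K$, i.e.\ minimality of $(\pi,V,K)$. That is the minimality actually needed there; the paper attributes uniqueness to minimality of $(\alpha,W,L)$, which plays no role in that step.
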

\begin{proof}
The 'if' direction is immediate.

For the converse, let $(\alpha, W, {_{\mathcal A}L_{\mathcal B'^{op}}})$ be the minimal Stinespring dilation of $\psi$. Note that $W$ need not be an $\mathcal C$-module map. Define $T \colon _{\mathcal A}K_{\mathcal B'^{op}} \to {_{\mathcal A}L_{\mathcal B'^{op}}}$ by $T(\pi(a)Vh) = \alpha(a)Wh$ and extend linearly. $T$ is a well defined contraction since for every $\xi = \sum_{i=1}^n \pi(a_i)Vh_i$
$$\lVert T \xi \rVert^2 = \sum_{i,j=1}^n \braket{W^*\alpha(a_j^*a_i)Wh_i}{h_j} = \sum_{i,j=1}^n \braket{\psi(a_j^*a_i)h_i}{h_j}$$
$$\leq \sum_{i,j=1}^n \braket{\varphi(a_j^*a_i)h_i}{h_j} = \sum_{i,j=1}^n \braket{V^*\pi(a_j^*a_i)Vh_i}{h_j} = \lVert \xi \rVert^2.$$
This implies $T$ extends to the closure. Furthermore, $T$ is a $\mathcal A$-$\mathcal B'^{op}$ bimodule map as
$$\alpha(a_1)T(\pi(a_2)Vh)b' = \alpha(a_1)\alpha(a_2)Whb' \text{ for } a_i \in \mathcal A, \, h \in H, \, b' \in \mathcal B'.$$
Setting $D = T^*T$, we have a positive contraction $D \in \pi(\mathcal A)' \cap \rho(\mathcal B')'\cap B(K)$ such that $\psi(a) = V^*D\pi(a)V$ for all $a \in \mathcal A$. The uniqueness of $D$ follows from minimality of $(\alpha, W,L)$.
\end{proof}

\begin{theorem}[Generalizing from \cite{Arveson69}]
Suppose $\varphi \colon \mathcal A \to B(H)$ such that $\varphi \in UCP_\mathcal C(\mathcal A,\mathcal B)$ has minimal Stinespring dilation $(\pi,V,{_\mathcal A K_{\mathcal B'^{op}}})$. Then a necessary and sufficient condition for $\varphi$ to be scalar extremal in $UCP_\mathcal C(\mathcal A,\mathcal B)$ is that the map $End({_\mathcal A K_{\mathcal B'^{op}}}) \ni D \mapsto V^*DV \in \mathcal C' \cap \mathcal B$ is injective.
\end{theorem}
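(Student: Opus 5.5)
We follow Arveson's classical argument, using Theorem \ref{Radon Nikodym type Theorem} to translate scalar convex decompositions of $\varphi$ into operators in the commutant. Here $End({_\mathcal A K_{\mathcal B'^{op}}}) = \pi(\mathcal A)' \cap \rho(\mathcal B')' \cap B(K)$. First we check that the map $\Phi(D) = V^*DV$ indeed lands in $\mathcal C' \cap \mathcal B$. Since $D$ commutes with $\rho(\mathcal B')$ and $V$ is a right $\mathcal B'^{op}$-module map, $V^*DV \in \rho(\mathcal B')' = \lambda(\mathcal B)$. Since $V$ intertwines the $\mathcal C$-actions and $D$ commutes with $\pi(\mathcal C) \subset \pi(\mathcal A)$, we get $cV^*DV = V^*\pi(c)DV = V^*D\pi(c)V = V^*DVc$. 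So $\Phi(D) \in \mathcal C'\cap\mathcal B$.

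For sufficiency, assume $\Phi$ is injective and suppose $\varphi = t\varphi_1 + (1-t)\varphi_2$ with $\varphi_i \in UCP_\mathcal C(\mathcal A,\mathcal B)$ and $0<t<1$. Then $t\varphi_1 \le \varphi$, so Theorem \ref{Radon Nikodym type Theorem} gives a positive contraction $D$ in the commutant with $t\varphi_1(a) = V^*\pi(a)DV$. Evaluating at $a=1$ gives $V^*DV = t1 = V^*(t1)V$. Injectivity then forces $D = t1$, hence $\varphi_1 = \varphi$, and similarly $\varphi_2=\varphi$. So $\varphi$ is extreme.

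For necessity, assume $\varphi$ is extreme and that $V^*DV = 0$ for some $D$ in the commutant. The commutant is a $*$-algebra and $\Phi$ is $*$-preserving, so we may split $D$ into real and imaginary parts and assume $D$ self-adjoint. Rescaling, assume $\|D\|\le 1/2$. Define
$$\varphi_\pm(a) = V^*\pi(a)(1 \pm D)V.$$
Because $1\pm D \ge 0$ commutes with $\pi(\mathcal A)$, each $\varphi_\pm$ equals $V^*(1\pm D)^{1/2}\pi(a)(1\pm D)^{1/2}V$ and so is completely positive. Each takes values in $\mathcal B$ since $(1\pm D)^{1/2}\in\rho(\mathcal B')'$. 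Each is unital because $V^*DV=0$. Each fixes $\mathcal C$: we have $\pi(c)V = Vc$, so $V^*D\pi(c)V = V^*DVc = 0$, giving $\varphi_\pm(c) = c$. Since $\varphi = \tfrac12\varphi_+ + \tfrac12\varphi_-$, extremality gives $\varphi_+ = \varphi$. Thus $V^*\pi(a)DV = 0$ for all $a$, and the uniqueness clause of Theorem \ref{Radon Nikodym type Theorem} (comparing with $Y=0$) yields $D=0$.

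The main obstacle is this uniqueness step, which needs minimality of the dilation. Directly, $\langle D\pi(a)Vh, \pi(b)Vk\rangle = \langle V^*\pi(b^*a)DVh,k\rangle = 0$, and the vectors $\pi(a)Vh$ span a dense subspace of $K$, so $D=0$. Care is also needed in checking that the $\mathcal C$-fixing property survives the perturbation; the multiplicative-domain/intertwining identity $\pi(c)V = V\lambda(c)$ from the Stinespring proposition handles this.
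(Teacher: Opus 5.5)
Your proposal is correct and follows essentially the same argument as the paper: necessity via the perturbations $\varphi_\pm(a) = V^*\pi(a)(1\pm D)V$ together with density of $\pi(\mathcal A)VH$ from minimality, and sufficiency via Theorem \ref{Radon Nikodym type Theorem} applied to $t\varphi_1 \le \varphi$ followed by injectivity at $a=1$. Your additional checks fill in details the paper leaves implicit: that $V^*DV \in \mathcal C'\cap\mathcal B$, the reduction to self-adjoint $D$ via real and imaginary parts, that $\varphi_\pm$ fix $\mathcal C$, and the use of a general $t$ rather than $t=1/2$.
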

\begin{proof}
Suppose that $\varphi$ is extreme. Let $V^*DV=0$ for some $D \in \pi(\mathcal A)' \cap \rho(\mathcal B')' \cap B(K)$. Without loss of generality $-1 \leq D \leq 1$. Then $\varphi = \frac{1}{2}(\varphi_+ +\varphi_-)$ for $\varphi_\pm = V^*(1\pm D)\pi V \in UCP_\mathcal C(\mathcal A,\mathcal B)$. Since $\varphi$ is extreme, $\varphi = \varphi_+$. Therefore $V^*D\pi(a) V = 0$ for all $a \in \mathcal A$. By minimality, $\pi(\mathcal A)VH \subset K$ is dense and so $\braket{D\pi(a_1)Vh}{\pi(a_2)Vh'} = \braket{V^*D\pi(a_2^*a_1)Vh}{h'} =0$ and $D=0$.

For the reverse direction, suppose that $D \mapsto V^*DV$ is injective. If $\varphi = \frac{1}{2}(\varphi_1 +\varphi_2)$ for $\varphi_i \in UCP_\mathcal C(\mathcal A,\mathcal B)$, by the Radon Nikodym type theorem \ref{Radon Nikodym type Theorem} there exist positive contractions $D_i \in \pi(\mathcal A)' \cap \rho(\mathcal B')' \cap B(K)$ such that $\frac{1}{2}\varphi_i = V^*D_i \pi V$. Since $\varphi_i(1) = 1$, we have $V^*(2D_i -1)V = 0$. By injectivity $2D_i = 1$ and so $\varphi_i = \varphi$, proving extremality.
\end{proof}

The following characterization of $\mathcal C' \cap \mathcal B$-extreme points of $UCP_{\mathcal C}(\mathcal A, \mathcal B)$ is a generalization of Theorem 3.1 in \cite{FZ98} (see Theorem 3.3 in \cite{BBK21} for the operator valued measure version) where $\mathcal B = B(H)$ acts on $H$, $\mathcal B' = \mathbb C 1$, and $\mathcal C = \mathbb C 1$.
\begin{theorem}[Generalizing from \cite{FZ98}]\label{Abstract Characterization of C extreme points}
Let $\varphi \colon \mathcal A \to B(H)$ such that $\varphi \in UCP_{\mathcal C}(\mathcal A,\mathcal B)$ with minimal Stinespring dilation $(\pi,V, {_\mathcal AK_{\mathcal B'^{op}}})$. Then the following are equivalent:
\begin{enumerate}
\item $\varphi$ is a $\mathcal C' \cap \mathcal B$-extreme point.
\item For every operator $D \in \pi(\mathcal A)'\cap \rho(\mathcal B')' \cap B(K)$ such that $V^*DV$ is invertible, there is a unitary operator $U \in \pi(\mathcal A)'\cap \rho(\mathcal B')' \cap B(K)$ and invertible operator $Z \in \mathcal C' \cap \mathcal B$ such that $UD^{1/2}V = VZ$.
\end{enumerate}
\end{theorem}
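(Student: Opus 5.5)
The plan is to adapt the Farenick--Zhou argument, using Theorem \ref{Radon Nikodym type Theorem} to pass between maps dominated by $\varphi$ and operators $D$ in $\pi(\mathcal A)'\cap\rho(\mathcal B')'\cap B(K)$. Since $D^{1/2}$ appears in (2), I read $D$ as positive throughout. A fact used repeatedly is that such a $D$ satisfies $V^*DV\in\mathcal C'\cap\mathcal B$. Indeed, $V^*DV$ commutes with $\rho(\mathcal B')$, so it lies in $\mathcal B$ by the bicommutant theorem. Also, since $V$ intertwines $\mathcal C$ and $D$ commutes with $\pi(\mathcal C)$,
$$V^*DV\,c = V^*D\pi(c)V = V^*\pi(c)DV = c\,V^*DV .$$

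\textbf{(2)$\Rightarrow$(1).} Let $\varphi=\sum_i t_i^*\varphi_i t_i$ be a proper $\mathcal C'\cap\mathcal B$-convex combination. Each $\psi_i=t_i^*\varphi_i t_i$ lies in $CP(\mathcal A,\mathcal B)$ and satisfies $\psi_i\le\varphi$. Theorem \ref{Radon Nikodym type Theorem} therefore gives positive contractions $D_i$ in the commutant with
$$\psi_i(a)=V^*D_i^{1/2}\pi(a)D_i^{1/2}V .$$
Evaluating at $a=1$ gives $V^*D_iV=t_i^*t_i$, which is invertible. Hypothesis (2) then supplies unitaries $U_i$ and invertible $Z_i\in\mathcal C'\cap\mathcal B$ with $U_iD_i^{1/2}V=VZ_i$. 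Since $U_i$ commutes with $\pi(a)$, this gives $\psi_i(a)=Z_i^*\varphi(a)Z_i$. Hence $\varphi_i=w_i^*\varphi w_i$ with $w_i=Z_it_i^{-1}\in\mathcal C'\cap\mathcal B$. Evaluating at $1$ gives $w_i^*w_i=1$, and $w_i$ is invertible, so $w_i$ is a unitary in $\mathcal C'\cap\mathcal B$. This is exactly $\mathcal C'\cap\mathcal B$-extremality.

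\textbf{(1)$\Rightarrow$(2).} Let $D\ge0$ be in the commutant with $V^*DV$ invertible. First replace $D$ by $\varepsilon D$ with $\varepsilon\|D\|<1/2$. A unitary $U$ and invertible $Z$ for $\varepsilon D$ give the same $U$ and the invertible $\varepsilon^{-1/2}Z$ for $D$, so this scaling is harmless. Now both $V^*DV$ and $V^*(1-D)V\ge 1/2$ are invertible. Put $D_1=D$, $D_2=1-D$, and
$$t_j=(V^*D_jV)^{1/2}\in\mathcal C'\cap\mathcal B,\qquad \varphi_j(a)=t_j^{-1}V^*D_j^{1/2}\pi(a)D_j^{1/2}V\,t_j^{-1}.$$
Each $\varphi_j$ is UCP, and it fixes $\mathcal C$ because $t_j$ commutes with $\mathcal C$. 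Moreover $\varphi=t_1^*\varphi_1t_1+t_2^*\varphi_2t_2$ is a proper combination. Extremality gives a unitary $u\in\mathcal C'\cap\mathcal B$ with $\varphi_1=u^*\varphi u$. Setting $Z=ut_1$, which is invertible, this reads
$$V^*D^{1/2}\pi(a)D^{1/2}V=Z^*V^*\pi(a)VZ\qquad\text{for all }a\in\mathcal A.$$
Define $W(\pi(a)VZh)=\pi(a)D^{1/2}Vh$. The displayed identity makes $W$ isometric. Its domain is dense in $K$ because $Z$ is surjective and the dilation is minimal. So $W$ extends to an isometry on $K$ commuting with $\pi(\mathcal A)$ and, by construction, with $\rho(\mathcal B')$. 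It satisfies $WVZ=D^{1/2}V$, hence $W^*D^{1/2}V=VZ$.

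\textbf{Main obstacle.} It remains to replace the co-isometry $W^*$ by a unitary $U$ in $\mathcal M=\pi(\mathcal A)'\cap\rho(\mathcal B')'\cap B(K)$. It suffices to show $WW^*=1$. Then $W$ is unitary, and $U=W^*$ gives $UD^{1/2}V=VZ$.

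My first attempt would use finiteness. In $\mathcal M$ we have $W^*W=1$, so $WW^*\sim1$, and $WW^*=1$ follows if $\mathcal M$ is a finite von Neumann algebra. I would try to derive this from the normal faithful center-valued trace $E$ on $\mathcal B$. The idea is to compress by $V$: the map $T\mapsto E(V^*TV)$ on $\mathcal M$ could detect $1-WW^*$, using injectivity-type arguments from minimality as in the scalar-extremality theorem above. I expect this to be the step where the standing finiteness hypothesis on $\mathcal B$ is genuinely needed.

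If that route fails, a fallback is to show directly that the range projection of $D^{1/2}$ is $1$ on the relevant cyclic subspace, using the invertibility of $V^*DV$ together with the combination involving $1-D$. Alternatively, one could extend $W^*$ to a unitary by matching $1-WW^*$ with $\ker W^*$ via a partial isometry in $\mathcal M$.
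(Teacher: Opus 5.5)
\textbf{The gap: $W^*$ is only a co-isometry.} Your (2)$\Rightarrow$(1) argument is correct and matches the paper's. So is your construction in (1)$\Rightarrow$(2) up to the isometry $W$: same rescaling, same splitting into $D$ and $1-D$, same use of extremality to get $V^*D^{1/2}\pi(\cdot)D^{1/2}V=Z^*\varphi(\cdot)Z$. The gap is the last step, which you leave open, and none of your three routes closes it.

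The range of $W$ is $\overline{\mathrm{span}\,\pi(\mathcal A)D^{1/2}VH}=\overline{D^{1/2}K}$, so $WW^*$ is the support projection of $D$. Conversely, suppose $U\in\mathcal M$ is a unitary with $UD^{1/2}V=VZ$. Then $U\pi(a)D^{1/2}Vh=\pi(a)VZh$, so $U$ maps a dense subspace of $\overline{D^{1/2}K}$ onto a dense subspace of $K$. Since $U$ is isometric and injective, this forces $\overline{D^{1/2}K}=K$. Hence (2) as written also asserts that $\ker D=\{0\}$ for every positive $D\in\mathcal M$ with $V^*DV$ invertible, and that is exactly what is missing.

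Each of your routes fails at this point:
\begin{enumerate}
\item Your first route fails as stated. Finiteness of $\mathcal B$ does not make $\mathcal M$ finite: already for $\mathcal B=\mathbb C$, the GNS commutant of a faithful normal state on $B(\ell^2)$ is $B(\ell^2)$. Also, $T\mapsto E(V^*TV)$ is faithful on $\mathcal M_+$ but not tracial, and comparing $W^*W$ with $WW^*$ needs precisely traciality.
\item Your second route must use extremality in an essential new way, because invertibility of $V^*DV$ alone does not give $\ker D=\{0\}$. For the non-extreme state $(x,y)\mapsto px+(1-p)y$ on $\mathbb C^2$ with $0<p<1$, $D=\mathrm{diag}(1,0)$ lies in $\pi(\mathcal A)'$ and $V^*DV=p$.
\item Completing $W^*$ to a unitary (your third route) is impossible unless $\ker D=\{0\}$ is already known, by the argument above.
\end{enumerate}

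\textbf{Comparison with the paper.} The paper obtains its unitary by treating $(\pi,D_1^{1/2}VT_1^{-1}w,K)$ as a minimal Stinespring dilation of $\varphi$, where $w$ is the unitary supplied by extremality, and invoking uniqueness of minimal dilations. Your $W^*$ is exactly the intertwiner that argument produces. That triple is minimal iff $\overline{D^{1/2}K}=K$, i.e.\ iff $\ker D=\{0\}$, which the paper does not verify. So the paper's proof does not supply the missing idea either, and you have correctly located the point it passes over.

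What both arguments actually prove is (2) with an isometry $W\in\mathcal M$ satisfying $D^{1/2}V=WVZ$; this is a unitary when $\ker D=\{0\}$. In that form (2)$\Rightarrow$(1) still goes through, since $V^*D^{1/2}\pi(a)D^{1/2}V=Z^*V^*W^*\pi(a)WVZ=Z^*\varphi(a)Z$. It is also all that the subsequent invertible-extreme-point corollary, and hence the main theorem, uses. In the main theorem's setting the unitary version then holds a posteriori: there $\varphi$ is a $*$-homomorphism, $V$ is unitary, and invertibility of $V^*DV$ means $D$ is invertible.

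For general $\mathcal A$, proving (2) exactly as stated requires showing that $\mathcal C'\cap\mathcal B$-extremality excludes positive $D\in\mathcal M$ with nontrivial kernel and $V^*DV$ invertible. Neither your sketch nor the paper does this.
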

\begin{proof}
Suppose that $\varphi$ is a $\mathcal C' \cap \mathcal B$-extreme point. Let $D \in \pi(\mathcal A)' \cap \rho(\mathcal B')' \cap B(K)$ such that $V^*DV$ is invertible. Then there exists a real number $r \in (0,1)$ such that $V^*D_iV$ is invertible for $D_1 = rD$ and $D_2 = 1-rD$. Clearly $D_1 + D_2 = 1$ and so $\varphi = V^*D_1^{1/2}\pi D_2^{1/2}V+V^*D_2^{1/2}\pi D_1^{1/2}V$. For each $i$, let $T_i = (V^*D_iV)^{1/2}$ Then $T_1$ and $T_2$ are invertible and
$$T_1^* \pi(c) T_1 +  T_2^* \pi(c) T_2  = \pi(c) (T_1^*T_1+ T_2^*T_2) = \pi(c)$$
since $T_i \in End({_\mathcal CH_{\mathcal B'^{op}}}) =\mathcal C' \cap \mathcal B$. Therefore
$$\varphi(\cdot) = \sum_{i=1}^2 T_i^*(T_i^{*-1}V^*D_i^{1/2}\pi(\cdot) D_i^{1/2} V T_i^{-1})T_i$$
is a representation of $\varphi$ as a proper $\mathcal C' \cap \mathcal B$-convex combination of maps
$$\varphi_i = (D_i^{1/2}VT_i^{-1})^*\pi(D_i^{1/2}VT_i^{-1}) \in UCP_\mathcal C(\mathcal A,\mathcal B).$$

Because $\varphi$ is a $\mathcal C' \cap \mathcal B$-extreme point, there is a unitary $W \in \mathcal C' \cap \mathcal B$ such that
$$V^*\pi V = U^* \varphi_1 U = (D_1^{1/2}VT_1^{-1}W)^*\pi(D_1^{1/2}VT_1^{-1}W).$$
Since all minimal Stinespring dilations are unitarily equivalent there is a unitary $U \in \pi(\mathcal A)'\cap \rho(\mathcal B')' \cap B(K)$ such that $U(D_1^{1/2}VT_1^{-1}W) = V$ and $U\pi(a) =\pi(a)U$ for all $a \in \mathcal A$. Hence $UD^{1/2}_1V = VZ_1$, where $Z_1 =W^*T_1$ is invertible in $\mathcal C' \cap \mathcal B$. Thus $UD^{1/2}V = VZ$, where $Z= \frac{1}{\sqrt{r}} Z_1$. This proves $1) \implies 2)$.

Suppose that the second statement holds and $\varphi = \sum_{i=1}^n b_i^*\varphi_ib_i$ be a proper $\mathcal C' \cap \mathcal B$-convex combination. For every $i$ $b_i^*\varphi_i b_i \leq \varphi$ and so there is a unique positive contraction $D_i \in \pi(\mathcal A)'\cap \rho(\mathcal B')' \cap B(K)$ such that $b_i^*\varphi_ib_i = V^*D_i^{1/2}\pi D_i^{1/2}V$. Because $b_i^*b_i$ is invertible, $(V^*D_iV)^{-1}$ exists. By hypothesis there exists a unitary $U_i \in \pi(\mathcal A)'\cap \rho(\mathcal B')' \cap B(K)$ and invertible operator $Z_i \in \mathcal C' \cap \mathcal B$ such that $U_i D_i^{1/2}V = VZ_i$. Then
$$\varphi_i = (D_i^{1/2}Vb_i^{-1})^*\pi (D_i^{1/2}Vb_i^{-1}) = (D_i^{1/2}Vb_i^{-1})^*\pi(U_i^*U_iD_i^{1/2}Vb_i^{-1})$$
$$=(U_iD_i^{1/2}Vb_i^{-1})^*\pi(U_iD_i^{1/2}Vb_i^{-1}) =(VZ_ib_i^{-1})^*\pi(VZ_ib_i^{-1}) = (Z_ib_i^{-1})^*\varphi (Z_ib_i^{-1}).$$
Since $\varphi(1) = \varphi_i(1) =1$, the operator $Z_ib_i^{-1}$ is an invertible isometry and so unitary.
\end{proof}

An immediate corollary of Theorem \ref{Abstract Characterization of C extreme points} gives one direction of the main theorem \ref{main theorem} of this article.

\begin{corollary}\label{hom corollary}
Every $*$-homomorphism $\varphi \colon \mathcal A \to B(H)$ such that $\varphi \in UCP_\mathcal C(\mathcal A,\mathcal B)$ is a $\mathcal C' \cap \mathcal B$-extreme point in $UCP_\mathcal C(\mathcal A,\mathcal B)$.
\end{corollary}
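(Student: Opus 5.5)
The plan is to verify condition (2) of Theorem \ref{Abstract Characterization of C extreme points} directly, exploiting the fact that a $*$-homomorphism is its own minimal Stinespring dilation. Let $\varphi$ be a $*$-homomorphism in $UCP_\mathcal C(\mathcal A,\mathcal B)$. Then the triple $(\varphi, 1_H, {_\mathcal A H_{\mathcal B'^{op}}})$ is a Stinespring dilation, since $\varphi(a) = 1^*\varphi(a)1$. It is minimal because $[\varphi(\mathcal A)H] \supseteq \varphi(1)H = H$. Here $\varphi$ acts on the left of $H$ and $\mathcal B'^{op}$ acts on the right through $\rho$; the latter action commutes with $\varphi(\mathcal A)$ because $\varphi(\mathcal A)\subset\lambda(\mathcal B)$. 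By the uniqueness in the Remark following the Stinespring proposition, we may therefore take $K = H$, $\pi = \varphi$ and $V = 1$ when applying the theorem.

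With this choice, the relevant commutant is $\pi(\mathcal A)'\cap\rho(\mathcal B')'\cap B(K) = \varphi(\mathcal A)'\cap\lambda(\mathcal B)$, by the bicommutant theorem $\rho(\mathcal B')' = \lambda(\mathcal B)$. Because $\varphi$ fixes $\mathcal C$, we have $\lambda(\mathcal C) = \varphi(\mathcal C) \subset \varphi(\mathcal A)$, and hence $\varphi(\mathcal A)'\cap\lambda(\mathcal B) \subset \mathcal C'\cap\mathcal B$.

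Now take $D$ in this commutant with $V^*DV = D$ invertible. Condition (2) only makes sense when $D^{1/2}$ is defined, so as in the proof of the theorem we take $D \geq 0$. Then $D^{1/2}$ lies in the same von Neumann algebra $\varphi(\mathcal A)'\cap\lambda(\mathcal B)$ and is invertible. Set $U = 1$ and $Z = D^{1/2}$. Then $Z$ is invertible, and by the previous paragraph $Z \in \mathcal C'\cap\mathcal B$. The required identity $UD^{1/2}V = VZ$ reduces to $D^{1/2} = D^{1/2}$. Theorem \ref{Abstract Characterization of C extreme points} then gives that $\varphi$ is $\mathcal C'\cap\mathcal B$-extreme.

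The only step needing care is the identification of the minimal dilation with $(\varphi,1,H)$, together with its bimodule structure. This follows from minimality and the uniqueness remark: any other minimal dilation is unitarily equivalent via an $\mathcal A$–$\mathcal B'^{op}$ bimodule unitary, so condition (2) transfers to it unchanged. Everything else is the bookkeeping that $D^{1/2}$ lands in $\mathcal C'\cap\mathcal B$.
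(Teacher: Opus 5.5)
Your proof is correct and follows the paper's own argument: take $(\varphi, 1, {_\mathcal A H_{\mathcal B'^{op}}})$ as the minimal dilation and verify condition (2) of Theorem \ref{Abstract Characterization of C extreme points} with $U = 1$ and $Z = D^{1/2}$. Your additional checks fill in details the paper leaves implicit: that $D \geq 0$ is tacitly assumed, that $D^{1/2}$ lies in $\mathcal C' \cap \mathcal B$ because $\varphi(c) = \lambda(c)$ for $c \in \mathcal C$, and that condition (2) transfers between unitarily equivalent minimal dilations.
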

\begin{proof}
The minimal dilation is simply $(\varphi,id,{_\mathcal A H_{\mathcal B'^{op}}})$ where the left $\mathcal A$ action is given by $\varphi$. Then for any $D \in \varphi(\mathcal A)' \cap \rho(\mathcal B')' \cap B(H)$ that is invertible, we can take $U=id$ and $Z=D^{1/2}$.
\end{proof}

\begin{corollary}[Generalizing Theorem 3.1.5 in \cite{Zhou98}]\label{Invertible Extreme point criterion}
Let $\varphi \colon \mathcal A \to B(H)$ such that $\varphi \in UCP_{\mathcal C}(\mathcal A,\mathcal B)$. Then $\varphi$ is a $\mathcal C' \cap \mathcal B$-extreme point if an only if for any $\psi \in CP(\mathcal A, \mathcal B)$ with $\psi \leq \varphi$ and $\psi(1)$ invertible there exists an invertible operator $Z \in \mathcal C' \cap \mathcal B$ such that $\psi=Z^* \varphi Z$.
\end{corollary}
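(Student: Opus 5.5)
We derive the corollary from Theorem \ref{Abstract Characterization of C extreme points} together with the Radon Nikodym type Theorem \ref{Radon Nikodym type Theorem}, translating between operators $D$ in the commutant and completely positive maps $\psi \leq \varphi$. Throughout, $(\pi,V,{_\mathcal A K_{\mathcal B'^{op}}})$ is the minimal Stinespring dilation of $\varphi$.

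For the forward direction, assume $\varphi$ is $\mathcal C'\cap\mathcal B$-extreme. Let $\psi\in CP(\mathcal A,\mathcal B)$ satisfy $\psi\le\varphi$ with $\psi(1)$ invertible. Theorem \ref{Radon Nikodym type Theorem} gives a positive contraction $D\in\pi(\mathcal A)'\cap\rho(\mathcal B')'\cap B(K)$ with $\psi(a)=V^*D^{1/2}\pi(a)D^{1/2}V$. Then $V^*DV=\psi(1)$ is invertible, so condition 2 of Theorem \ref{Abstract Characterization of C extreme points} supplies a unitary $U$ in the same commutant and an invertible $Z\in\mathcal C'\cap\mathcal B$ with $UD^{1/2}V=VZ$. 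Since $U$ commutes with $\pi(a)$, we can insert $U^*U$ as in the last computation of that proof:
$$\psi(a)=(UD^{1/2}V)^*\pi(a)(UD^{1/2}V)=Z^*V^*\pi(a)VZ=Z^*\varphi(a)Z .$$

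For the converse, assume the stated condition holds and verify condition 2 of Theorem \ref{Abstract Characterization of C extreme points}. Let $D\in\pi(\mathcal A)'\cap\rho(\mathcal B')'\cap B(K)$ with $V^*DV$ invertible. Positivity of $D$ is implicit, since $D^{1/2}$ appears in condition 2 and the first half of that proof uses $D_i^{1/2}$; we treat $D\ge0$. After scaling $D$ to a contraction, which only rescales $Z$ by a positive constant, set $\psi(a)=V^*D^{1/2}\pi(a)D^{1/2}V$. This $\psi$ is completely positive, takes values in $\mathcal B$ because $D$ commutes with $\rho(\mathcal B')$, satisfies $\psi\le\varphi$ by the easy direction of Theorem \ref{Radon Nikodym type Theorem}, and has $\psi(1)=V^*DV$ invertible. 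The hypothesis then gives an invertible $Z\in\mathcal C'\cap\mathcal B$ with $\psi=Z^*\varphi Z$, that is,
$$V^*D^{1/2}\pi(a)D^{1/2}V=(VZ)^*\pi(a)(VZ).$$

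The remaining task is to produce the unitary $U$. Define $U$ on $\pi(\mathcal A)VH$ by $U\pi(a)D^{1/2}Vh=\pi(a)VZh$. The displayed identity shows that $U$ is isometric on $[\pi(\mathcal A)D^{1/2}VH]$, and it commutes with $\pi(\mathcal A)$ and $\rho(\mathcal B')$. The main obstacle is that $U$ must be a unitary on all of $K$. Its range $[\pi(\mathcal A)VZH]$ equals $[\pi(\mathcal A)VH]=K$ because $Z$ is invertible, so $U^{-1}$ is a well-defined isometry from $K$ onto $[\pi(\mathcal A)D^{1/2}VH]$. This domain space is a subspace of $K$ invariant under $\pi(\mathcal A)$ and $\rho(\mathcal B')$. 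I would try to show it is all of $K$ by using that $D^{1/2}V$ has closed range intertwined with $VZ$, or alternatively by exploiting finiteness of $\mathcal B$, which the paper assumes throughout this section. If that fails, I would take the partial isometry $W=U^{-1}$, which satisfies $WVZ=D^{1/2}V$ with range $[\pi(\mathcal A)D^{1/2}VH]$, and argue as in the $(2)\Rightarrow(1)$ part of Theorem \ref{Abstract Characterization of C extreme points}, which only uses the relation $\psi_i=(Z_ib_i^{-1})^*\varphi(Z_ib_i^{-1})$. Concretely, I would go directly to extremality: given a proper combination $\varphi=\sum b_i^*\varphi_ib_i$, apply the hypothesis to $\psi_i=b_i^*\varphi_ib_i\le\varphi$, whose $\psi_i(1)=b_i^*b_i$ is invertible, to get $\varphi_i=(Z_ib_i^{-1})^*\varphi(Z_ib_i^{-1})$. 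Unitality then makes $Z_ib_i^{-1}$ an invertible isometry, hence a unitary in $\mathcal C'\cap\mathcal B$. This bypasses $U$ entirely, so I would adopt this direct route for the converse.
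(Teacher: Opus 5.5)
Your proposal is correct, and the route you finally commit to is the paper's own: the forward direction takes $D$ from Theorem~\ref{Radon Nikodym type Theorem}, applies condition 2 of Theorem~\ref{Abstract Characterization of C extreme points}, and inserts $U^*U$, while the converse applies the hypothesis directly to $\psi_i=b_i^*\varphi_i b_i\le\varphi$ and uses $\varphi_i(1)=1$ to conclude that $Z_i b_i^{-1}$ is a unitary in $\mathcal C'\cap\mathcal B$. You can delete the attempted construction of a unitary $U$ on all of $K$: it is not needed, and it would stall whenever $\ker D\neq 0$, because $[\pi(\mathcal A)D^{1/2}VH]=\overline{D^{1/2}K}$.
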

\begin{proof}
Assume that $\varphi$ is $\mathcal C' \cap \mathcal B$-extreme. Let $\psi \in CP(\mathcal A, \mathcal B)$ with $\psi \leq \varphi$ and $\psi(1)$ invertible. Take the minimal Stinespring dilation of $\varphi$, $(\pi,V,{_\mathcal A K_{\mathcal B'^{op}}})$. By theorem \ref{Radon Nikodym type Theorem} there is a $D \in \pi(\mathcal A)' \cap \rho(\mathcal B')' \cap B(K)$ such that $\psi = V^*D\pi V$. Since $\psi(1) = V^*DV$ is invertible, we can apply theorem \ref{Abstract Characterization of C extreme points}. There exists a unitary $U \in \pi(\mathcal A)' \cap \rho(\mathcal B')' \cap B(K)$ and an invertible operator $Z \in \mathcal C' \cap \mathcal B$ such that $UD^{1/2}V=VZ$. Hence,
$$\psi(a) = V^*D^{1/2}\pi(a)D^{1/2}V = V^*D^{1/2}\pi(a)U^*UD^{1/2}V = V^*D^{1/2}U^*\pi(a)UD^{1/2}V$$
$$=(VZ)^*\pi(a)(VZ) = Z^*\varphi(a)Z.$$

For the converse, let $\varphi = \sum_{i=1}^n T_i^* \varphi_i T_i$ be a proper $\mathcal C' \cap \mathcal B$-convex combination. Then $T_i^*\varphi_iT_i \leq \varphi$ for all $i=1,...,n$ and each $T_i$ is invertible. Therefore $T_i^*\varphi_i(1)T_i$ is invertible. By hypothesis, there are invertible operators $Z_i \in \mathcal C' \cap \mathcal B$ such that
$$T_i^*\varphi_iT_i = Z_i^*\varphi Z_i.$$
Defining $U_i = Z_iT_i^{-1}$ we see that
$$\varphi_i = U_i^* \varphi U_i$$
and $U_i^*U_i = U_i^*\varphi(1)U_i = \varphi_i(1)=1$ making $U_i$ unitary. Hence $\varphi$ and $\varphi_i$ are unitarily equivalent.
\end{proof}

The following proposition appears in \cite{FPS11} where $\mathcal B = B(H)$ and $H$ is a finite dimensional Hilbert space. The same proof applies to $\mathcal B$ a finite von Neumann algebra due to the presence of traces.

\begin{proposition}[Generalizing Proposition 2.1 in \cite{FPS11}]
Let $\mathcal B$ be a finite von Neumann algebra. The $\mathcal C' \cap \mathcal B$-extreme points of $UCP_\mathcal C(\mathcal A,\mathcal B)$ are also scalar extreme points.
\end{proposition}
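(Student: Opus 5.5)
The plan is to take a nontrivial scalar convex combination and read it as a proper $\mathcal C' \cap \mathcal B$-convex combination. $C^*$-extremality then turns it into an identity about a unital, trace-preserving "channel" on $\mathcal B$. Faithfulness of the center valued trace $E \colon \mathcal B \to Z(\mathcal B)$ should force that identity to be trivial.

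Let $\varphi$ be $\mathcal C' \cap \mathcal B$-extreme and suppose $\varphi = t\varphi_1 + (1-t)\varphi_2$ with $\varphi_i \in UCP_\mathcal C(\mathcal A,\mathcal B)$ and $t \in (0,1)$. Put $b_1 = \sqrt{t}\,1$ and $b_2 = \sqrt{1-t}\,1$. These are invertible, lie in $\mathcal C'\cap\mathcal B$, and satisfy $b_1^*b_1+b_2^*b_2=1$, so this is a proper $\mathcal C' \cap \mathcal B$-convex combination. By extremality there are unitaries $u_i \in \mathcal C' \cap \mathcal B$ with $\varphi_i = u_i^*\varphi u_i$. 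It therefore suffices to show that $u_i^*\varphi(a)u_i = \varphi(a)$ for every $a\in\mathcal A$.

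Fix $a$, write $x = \varphi(a) \in \mathcal B$, and set $k_1 = \sqrt t\, u_1$ and $k_2 = \sqrt{1-t}\, u_2$. Define $\Phi(y) = \sum_i k_i^* y k_i$ on $\mathcal B$. Since the $u_i$ are unitary, $\sum_i k_i^*k_i = \sum_i k_ik_i^* = 1$, and the combination above says exactly $\Phi(x) = x$. Two facts about $\Phi$ will be used:
\begin{itemize}
\item $\Phi$ is unital and completely positive, so Kadison--Schwarz gives $\Phi(x^*x) \ge \Phi(x)^*\Phi(x) = x^*x$.
\item $\Phi$ preserves $E$: the trace property $E(yz)=E(zy)$ gives $E(u^*yu) = E(y)$ for unitaries $u \in \mathcal B$, hence $E(\Phi(y)) = E(y)$.
\end{itemize}
Consequently $\Phi(x^*x) - x^*x \ge 0$ and $E(\Phi(x^*x)-x^*x) = 0$. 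Faithfulness of $E$ then gives $\Phi(x^*x) = x^*x$. This is the one point where finiteness of $\mathcal B$ enters, and it is the main step; in \cite{FPS11} the normalized trace on $M_n$ plays this role.

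The remaining step is a direct expansion. Using $\Phi(x)=x$, $\Phi(x^*x)=x^*x$ and $\sum_i k_i^*k_i = 1$,
$$\sum_i (xk_i - k_ix)^*(xk_i-k_ix) = \Phi(x^*x) - \Phi(x)^*x - x^*\Phi(x) + x^*x = 0.$$
A sum of positive operators that vanishes has each term zero, so $xk_i = k_ix$, i.e. $u_i$ commutes with $\varphi(a)$. Hence $\varphi_i(a) = u_i^*\varphi(a)u_i = \varphi(a)$ for all $a$, so $\varphi_1=\varphi_2=\varphi$ and $\varphi$ is scalar extreme.
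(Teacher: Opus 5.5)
Your proof is correct. The first half coincides with the paper's: you rewrite the scalar combination with coefficients $\sqrt{t}\,1$ and $\sqrt{1-t}\,1$, and $\mathcal C'\cap\mathcal B$-extremality gives $\varphi_i=u_i^*\varphi u_i$.

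The difference is in how you force $u_i^*\varphi(a)u_i=\varphi(a)$. The paper passes to the scalar traces $\tau_\omega=\omega\circ E$, with $\omega$ a normal state on $Z(\mathcal B)$. It notes that $\varphi(a)$ and $u_i^*\varphi(a)u_i$ lie on a common sphere of $L^2(\mathcal B,\tau_\omega)$, uses strict convexity of Hilbert space to identify them there, and then lets $\omega$ vary to get back to $\mathcal B$. You stay at the operator level. Kadison--Schwarz for $\Phi$, together with $E\circ\Phi=E$ and faithfulness of $E$, gives $\Phi(x^*x)=x^*x$. Your sum-of-squares identity then yields $[\varphi(a),u_i]=0$ directly in $\mathcal B$.

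The two computations are the same identity seen at different levels. Writing $t_1=t$ and $t_2=1-t$, one has $xk_i-k_ix=\sqrt{t_i}\,u_i(u_i^*xu_i-x)$. Applying $\tau_\omega$ to your identity therefore gives $\sum_i t_i\|u_i^*xu_i-x\|_{\tau_\omega,2}^2=0$, which is exactly what underlies the paper's strict-convexity step. Your version gives a self-contained finish inside $\mathcal B$: no $L^2$ completions, no family of states, and no final separation argument over $\omega$. The paper's version is shorter and more geometric.

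Incidentally, you can drop the Kadison--Schwarz step. Once $\Phi(x)=x$, your expansion already shows $\Phi(x^*x)-x^*x=\sum_i(xk_i-k_ix)^*(xk_i-k_ix)\ge 0$. Then $E$-invariance and faithfulness make this sum vanish in one stroke.
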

\begin{proof}
Suppose $\varphi \colon \mathcal A \to B(H)$ such that $\varphi \in UCP_\mathcal C(\mathcal A,\mathcal B)$ is a $\mathcal C' \cap \mathcal B$-extreme point and $\varphi = \lambda \varphi_1 +(1-\lambda)\varphi_2$ for some $\varphi_i \colon \mathcal A \to B(H)$ such that $\varphi_i \in UCP_\mathcal C(\mathcal A,\mathcal B)$ and $\lambda \in (0,1)$. Define $T_1 = \sqrt{\lambda} 1_\mathcal B , T_2 = \sqrt{1-\lambda} 1_\mathcal B \in \mathcal C' \cap \mathcal B$ making $\varphi$ a proper $\mathcal C' \cap \mathcal B$-convex combination $\varphi = \sum_{i=1}^2T_i^* \varphi_i T_i$. Hence, there are unitaries $U_i \in \mathcal C' \cap \mathcal B$ such that $\varphi_i = U_i^* \varphi U_i$. Then for any $a \in \mathcal A$, $\varphi(a) = \lambda U_1^* \varphi(a) U_1 + (1-\lambda) U_2^* \varphi(a) U_2$.

Since $\mathcal B$ is finite, it has a normal faithful center valued trace $E \colon \mathcal B \to Z(\mathcal B)$. For every normal state $\omega$ on $Z(\mathcal B)$ define $\tau_\omega = \omega \circ E$. Then $\varphi(a)$, $U_1^*\varphi(a)U_1$, and $U_2^*\varphi(a)U_2$ belong to the sphere of radius $\lVert \varphi(a)\rVert_{\tau_\omega,2}$ in $L^2(\mathcal B,\tau_\omega)$. Since the sphere of a Hilbert space contains no nontrivial triples from a line, these three terms coincide in $L^2(\mathcal B,\tau_\omega)$. Since
$$\lVert \varphi(a) - \varphi_1(a)\rVert_{\tau_\omega,2} = 0 = \lVert \varphi(a) - \varphi_2(a)\rVert_{\tau_\omega,2}$$
for every $a \in \mathcal A$, and every $\omega \in Z(\mathcal B)_*$, $\varphi = \varphi_1 = \varphi_2$ and $\varphi$ is an extreme point of $UCP_\mathcal C(\mathcal A,\mathcal B)$.
\end{proof}

\section{Proof of the main theorem}

We can now prove the other direction of the main theorem \ref{main theorem}.

\begin{proposition}
Let $X$ and $Y$ be compact Hausdorff spaces, $\varphi \in UCP_{C(Y)}(C(X), \mathcal B)$ a $C(Y)' \cap \mathcal B$-extreme point and $\mathcal B$ a finite von Neumann algebra with normal faithful center valued trace $E$. Then $\varphi$ is a $*$-homomorphism.
\end{proposition}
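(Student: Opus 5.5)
The plan is to use Corollary \ref{Invertible Extreme point criterion} to turn $C(Y)'\cap\mathcal B$-extremality into an intertwining identity. Then I will use the center valued trace $E$ to upgrade the Kadison--Schwarz inequality to an equality, which puts every positive element of $C(X)$ in the multiplicative domain of $\varphi$.

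\emph{Step 1: an intertwining operator.} Fix $f\in C(X)$ with $0\le f\le 1$, fix $t>0$, and put $h=t+f$. Then $h$ is invertible and $h\le 1+t$. Define $\psi(g)=\varphi(hg)/(1+t)$.
\begin{itemize}
\item Since $C(X)$ is commutative, $\psi$ is completely positive: $\psi(g)=\varphi(h^{1/2}gh^{1/2})/(1+t)$.
\item $\varphi-\psi=\varphi\bigl((1+t-h)\,\cdot\,\bigr)/(1+t)$ is completely positive, so $\psi\le\varphi$.
\item $\psi(1)=\varphi(h)/(1+t)\ge t/(1+t)>0$ is invertible.
\end{itemize}
Corollary \ref{Invertible Extreme point criterion} therefore gives an invertible $Z\in C(Y)'\cap\mathcal B$ (absorbing the constant $1+t$) with
\[
\varphi(hg)=Z^*\varphi(g)Z\qquad(g\in C(X)).
\]

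\emph{Step 2: consequences of the identity.} Iterating with $g$ replaced by $hg$ gives $\varphi(h^n g)=(Z^n)^*\varphi(g)Z^n$. In particular:
\begin{itemize}
\item $\varphi(h)=Z^*Z$ and $\varphi(h^2)=Z^*Z^*ZZ$;
\item taking $g=h^{-n}$ gives $1=(Z^n)^*\varphi(h^{-n})Z^n$.
\end{itemize}
This yields the two-sided growth bounds $\|Z^n\|\le\|h\|^{n/2}$ and $\|Z^{-n}\|\le\|h^{-1}\|^{n/2}$.

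\emph{Step 3: reduce to a trace identity.} By Kadison--Schwarz, $\varphi(h^2)-\varphi(h)^2\ge 0$, and $E$ is faithful. It therefore suffices to prove
\[
E(\varphi(h^2))=E(\varphi(h)^2),\qquad\text{i.e.}\qquad E(Z^*Z^*ZZ)=E(Z^*ZZ^*Z).
\]
This is the main obstacle. By cyclicity it is the statement $E(AB)=E(A^2)$ with $A=ZZ^*$ and $B=Z^*Z$, and this is not automatic.

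My first attempt is to replace $Z$ by its polar decomposition. Since $\mathcal B$ is finite, so is $C(Y)'\cap\mathcal B$, and $Z=W|Z|$ with $W$ unitary in $C(Y)'\cap\mathcal B$. The unitary can be absorbed, because $W^*\varphi W$ is again an extreme point unitarily equivalent to $\varphi$. Then I would exploit the full family $\{(Z^n)^*Z^n=\varphi(h^n)\}_{n\in\mathbb Z}$. These all lie in $\varphi(C(X))$, and by Step 2 they have controlled spectral radius.

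Concretely, I would compare $\tau_\omega\bigl(\varphi(h^{2^k})\bigr)$ with $\tau_\omega\bigl(\varphi(h)^{2^k}\bigr)$ for normal states $\omega$ on $Z(\mathcal B)$, with $\tau_\omega=\omega\circ E$ as in the preceding proposition. The traciality of $\tau_\omega$ gives
\[
\tau_\omega\bigl(((Z^n)^*Z^n)^m\bigr)=\tau_\omega\bigl((Z^nZ^{n*})^m\bigr).
\]
Combined with the two-sided norm bounds, a Fuglede--Kadison determinant or log-trace argument should force $|Z|^2=Z^*Z$ to commute with $Z$, so that $Z$ is normal. Once $Z$ is normal, $\varphi(h^2)=\varphi(h)^2$ follows immediately.

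\emph{Step 4: conclusion.} From $\varphi(h^2)=\varphi(h)^2$, the element $h=t+f$ lies in the multiplicative domain of $\varphi$, and since $\varphi(t)=t$ so does $f$. Every element of $C(X)$ is a linear combination of functions $0\le f\le1$, and the multiplicative domain is a $C^*$-subalgebra. Hence it is all of $C(X)$, and $\varphi$ is a $*$-homomorphism. It fixes $C(Y)$ by hypothesis.
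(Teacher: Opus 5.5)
Your Steps 1, 2 and 4 match the paper; the scaling by $1+t$ and the growth bounds are harmless but unnecessary. However, Step 3, which is the entire content of the argument, is never carried out. The identity $E(Z^*Z^*ZZ)=E(Z^*ZZ^*Z)$ is equivalent to normality of $Z$. You leave it to an unspecified Fuglede--Kadison or log-trace argument that ``should force'' normality, and no such argument is given.

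The proposed reduction also fails as stated. With $Z=W|Z|$, replacing $\varphi$ by $W^*\varphi W$ does not turn the intertwiner into $|Z|$: the relation becomes $\varphi(hg)=|Z|\,(W^*\varphi(g)W)\,|Z|$, which mixes $\varphi$ and $W^*\varphi W$.

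The missing idea is that the Kadison--Schwarz inequality does more than reduce the problem to a trace identity. It supplies one half of that identity, and traciality supplies the other half.

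\emph{The paper's route.} Write $Z=uD$ with $D=(Z^*Z)^{1/2}$ invertible and $u$ unitary.
\begin{enumerate}
\item The inequality $\varphi(h^2)\ge\varphi(h)^2$ reads $Du^*D^2uD\ge D^4$.
\item Conjugating by $D^{-1}$ gives $u^*D^2u\ge D^2$.
\item Since $E(u^*D^2u)=E(D^2)$, faithfulness of $E$ forces $u^*D^2u=D^2$.
\item Hence $u$ commutes with $D$, so $Z$ is normal, and $\varphi(h^2)=Z^*Z^*ZZ=(Z^*Z)^2=\varphi(h)^2$.
\end{enumerate}

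\emph{Closing the gap in your own notation.} Take $A=ZZ^*$, $B=Z^*Z$, and $\tau_\omega=\omega\circ E$ for a normal state $\omega$ on $Z(\mathcal B)$.
\begin{enumerate}
\item Traciality gives $\tau_\omega(A^2)=\tau_\omega(B^2)$.
\item Cauchy--Schwarz in $L^2(\mathcal B,\tau_\omega)$ then gives $\tau_\omega(AB)\le\tau_\omega(A^2)^{1/2}\tau_\omega(B^2)^{1/2}=\tau_\omega(B^2)$.
\item Kadison--Schwarz gives $\tau_\omega(AB)=\tau_\omega\bigl(\varphi(h^2)\bigr)\ge\tau_\omega\bigl(\varphi(h)^2\bigr)=\tau_\omega(B^2)$.
\item Therefore $\omega\bigl(E(\varphi(h^2)-\varphi(h)^2)\bigr)=0$ for every normal state $\omega$ on $Z(\mathcal B)$.
\item So $E\bigl(\varphi(h^2)-\varphi(h)^2\bigr)=0$, and faithfulness of $E$ finishes Step 3.
\end{enumerate}
No iteration over $h^n$ and no determinant argument is needed.
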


\begin{proof}
Let $g \in C(X)$ be a strictly positive element bounded away from $0$ and $1$, such that $\varepsilon  \leq g \leq 1-\varepsilon$ for some $\varepsilon > 0$. We define a map $\psi : C(X) \to \mathcal B$ by
\[ \psi(f) = \varphi(gf). \]
By commutativity and positivity of $g$, $\psi$ is a completely positive map. Furthermore, the map $\varphi - \psi$, given by $(\varphi - \psi)(f) = \varphi((1-g)f)$, is also completely positive because $1-g \geq \varepsilon  > 0$. Thus, $\psi \leq \varphi$. Finally, we note that $\psi(1) = \varphi(g)$ is an invertible, positive operator in $\mathcal B$  as it is bounded below by $\varepsilon $.

By Corollary \ref{Invertible Extreme point criterion}, there exists an invertible operator $Z \in C(Y)'  \cap \mathcal B$ such that
$$\varphi(gf) = Z^* \varphi(f) Z \quad \text{for all } f \in C(X)$$
and so for $f=1$, $\varphi(g) = Z^* Z$.
We can iterate this relation by evaluating the map at $g^2 f$:
$$ \varphi(g^2 f) = \varphi(g(gf)) = Z^* \varphi(gf) Z = Z^* (Z^* \varphi(f) Z) Z = (Z^*)^2 \varphi(f) Z^2. $$
Evaluating this at $f=1$, we obtain $\varphi(g^2) = (Z^*)^2 Z^2$.
Since $\varphi$ is a UCP map, the Kadison-Schwarz inequality implies that $\varphi(g^2) \geq \varphi(g)^2$ and so
$$(Z^*)^2 Z^2 \geq (Z^* Z)^2.$$

We continue by showing normality of $Z$. Take the polar decomposition $Z = uD$, where $D = (Z^* Z)^{1/2} = \varphi(g)^{1/2}$. Because $Z$ is invertible, $D$ is strictly positive and $u$ is unitary. Substituting $Z = uD$ into the previous inequality gives
$$D u^* D^2 uD \geq D^4. $$
Since $D$ is invertible, $u^*D^2u \geq D^2$ as
$$\braket{(u^*D^2u-D^2)\xi}{\xi} = \braket{(Du^*D^2uD-D^4)D^{-1}\xi}{D^{-1}\xi}\geq 0 \text{ for all } \xi\in H.$$

Since $u^*D^2u-D^2 \geq 0$ and $E(u^*D^2u-D^2)=0$, faithfulness of the center valued trace implies that $u^*D^2u=D^2$.

Because $D$ is the unique positive square root of $D^2$, $u$ must also commute with $D$. Since its unitary part commutes with its positive part, $Z$ is a normal operator.

Because $Z$ is normal, $Z^* Z = Z Z^*$. Returning to $\varphi(g)$ and $\varphi(g^2)$, we find
$$\varphi(g^2) = (Z^*)^2 Z^2 = Z^* Z^* Z Z = (Z^* Z)^2 = \varphi(g)^2.$$
Since all positive operators $g \in C(X)$ with $\varepsilon  \leq g \leq 1-\varepsilon$ belong the multiplicative domain of $\varphi$ and these positive elements span $C(X)$, the multiplicative domain of $\varphi$ is the entirety of $C(X)$. Thus, $\varphi$ is a $*$-homomorphism.
\end{proof}

Combining this result with Corollary $\ref{hom corollary}$ yields a classification of $C(Y)' \cap \mathcal B$-extreme points of $UCP_{C(Y)}(C(X),\mathcal B)$ when $\mathcal B$ is a finite von Neumann algebra. Hence, proving the main theorem \ref{main theorem}. This naturally leads us to the type $III$ question. Can we build a $\mathcal B$-extremal UCP map $\varphi \colon C(X) \to \mathcal B$ where $\mathcal B$ is a type $III$ von Neumann algebra and $\varphi$ is not a $*$-homomorphism?

\section*{Acknowledgments}

Dimitrios Giannakis acknowledges support from the U.S.\ Department of Defense, Basic Research Office under Vannevar Bush Faculty Fellowship grant N00014-21-1-2946 and the U.S.\ Department of Energy under grant DE-SC0025101.
Michael Montgomery and Travis Russell were supported as postdoctoral fellows from the first grant. Travis Russell was also supported by a grant from the United States-Israel Binational Science Foundation (BSF-2024161), Jerusalem, Israel, and by a grant from the TCU Research and Creative Activities Fund.

\bibliographystyle{plain}
\bibliography{references}

\end{document}